\documentclass[11pt,a4,twoside]{amsart}


%
%
\DeclareMathOperator{\conv }{conv }%

\DeclareMathOperator{\Sym}{Sym}
\DeclareMathOperator{\Alt}{Alt}
\DeclareMathOperator{\Aut}{Aut}
\DeclareMathOperator{\Mat}{Mat}

\def\GL{{\rm GL}}
\DeclareMathOperator{\End}{End}
\def\Mat{\textup{Mat}}

\def\Irr{{\rm Irr}}
\def\PSL{{\rm PSL}}

\newcommand{\C}{\mathbb C}%
\newcommand{\R}{\mathbb R}%
\newcommand{\HH}{\mathbb H}%
\newcommand{\N}{\mathbb N}%
\newcommand{\Z}{\mathbb Z}%

\allowdisplaybreaks[4]%

\parskip2pt%

\pagenumbering{arabic}%



\theoremstyle{plainItalics}
\newtheorem{theorem}{Theorem}[section]

\newtheorem{corollary}[theorem]{Corollary}

\newtheorem{lemma}[theorem]{Lemma}

\theoremstyle{plainNoItalics}
\newtheorem{defi}[theorem]{Definition}
\newtheorem{remark}[theorem]{Remark}

\newtheorem{quest}[theorem]{Question}%

\numberwithin{theorem}{section}%
\numberwithin{figure}{section}%
\numberwithin{table}{section}%

\DeclareMathOperator{\aff}{aff}

\setcounter{secnumdepth}{2}


\title{On Permutation Polytopes - Notions of Equivalence}

\author[Baumeister]{Barbara Baumeister}
\address{Barbara Baumeister, Universit\"at Bielefeld, Germany}
\email{b.baumeister@math.uni-bielefeld.de}
\author[Gr\"uninger]{Matthias Gr\"uninger}
\address{Matthias Gr\"uninger, Universit\'e catholique de Louvain, Belgium}
\email{matthias.grueninger@uclouvain.be}


\begin{document}
  
\maketitle

\begin{abstract}
We clarify the notion of  effective equivalence and characterize geometrically the
effectively equivalent permutation groups. In particular, we present examples showing that the latter 
do  not correspond to affinely equivalent polytopes thereby answering Question~2.12 of ~\cite{Baumeister2007}.
We apply our characterization to our examples.
\end{abstract}
\section{Introduction}
The permutation polytopes are an interesting class of polytopes, see for instance \cite{Baumeister2007,
BS96, BHNP11, BHNP12, BaSt03, BLO11, DLY09, EFRS06}.
In~\cite{Baumeister2007} the authors also studied which groups lead to affinely equivalent polytopes.
Abstractly isomorphic permutation groups do not need to have affinely equivalent permutation polytopes:
 For instance 
$\langle (12), (34)\rangle$ and $\langle (12)(34),$ $ (13)(24)\rangle$ are isomorphic groups,
 but the associated permutation polytopes are a quadrangle and a 
tetrahedron, respectively, and therefore not affinely equivalent.

On the other hand, the notion of isomorphism of permutation groups is too restrictive to describe the
affine equivalent permutation polytopes.
In ~\cite{Baumeister2007}, Section 2.1,
it has been  observed that there are two permutation groups which are not isomorphic 
as permutation groups but whose permutation polytopes are affinely equivalent: 
The permutation polytopes of the  permutation groups 
$$\langle(1234) \rangle \leq \Sym(4), \langle(1234)(5)\rangle \leq \Sym(5),
\langle(1234)(56)\rangle  \leq \Sym(6)$$
are all tetrahedrons and therefore all affinely equivalent,
but the underlying groups are not isomorphic as permutation groups.

The notion of isomorphism of permutation groups has been generalized to the notion of 
effectively equivalent permutation groups  in \cite{Baumeister2007}; for the definition see the next section. In fact, all the 
permutation groups
listed in the last paragraph are effectively equivalent permutation groups. 

The hope was that two permutation groups are effectively
equivalent if and
only if the groups are isomorphic and the corresponding permutation polytopes are affinely equivalent.
Here we  present an example of two permutation groups which are isomorphic as abstract groups
and whose permutation polytopes are
affinely equivalent, but which are not effectively equivalent. 

Up to now the tools in studying permutation polytopes were mainly connected to convex geometry and group theory. In this note we provide a 
representation theoretical basis for the study of these polytopes.
Moreover, we use representation theory to prove  a criterion which determines when two permutation groups $G_1$ and $G_2$
are effective equivalent in 
terms of their permutation polytopes $P(G_1)$ and $P(G_2)$ (see Theorem~\ref{characterization}, Corollary~\ref{Criterion} and 
Theorem~\ref{conjugate}).

The organization of the note is as follows: We start by introducing  the notation and recall the relevant previous results.
In the third section we present the examples. The representation theoretical approach is presented in 
the fourth section. There we also characterize the effectively equivalent groups.  
These results are applied to  our examples in the last section.

\textbf{Acknowledgments:}  The authors like to thank for support by the DFG
through the SFB 701 ``Spectral Structures and Topological Methods in
Mathematics''. Moreover, they like to thank Benjamin Nill for his very useful comments which helped
to improve the paper.

\section{Notation and previous results}

The convex and the affine hull of a set $S$ in a real vector space
will be denoted by $\conv(S)$ and by $\aff(S)$, respectively.

\subsection{Permutation polytopes}
An injective homomorphism $\pi:G \to \Sym(n)$ is called {\em permutation representation}. 
The pair  $(G, \pi)$ is called {\em permutation group}.  In this case,  we obtain a  representation polytope as follows.

The symmetric group $\Sym(n)$ acts on the set $ \{1, \ldots , n\}$. Let $V$ be an $n$-dimensional $\R$-vector space
with basis $\{  e_1, \ldots, e_n \}$ and let $\Sym(n)$ act on this vector space by permuting the indices of the vectors in the  basis.
Then $V$ is the  {\em  permutation module for $\Sym(n)$}. This module induces a 
representation 
$R:\Sym(n) \to \GL(V) = \GL(\R^n)$, 
 $g \mapsto M_g$, and thereby  identifies the symmetric group $\Sym(n)$ 
with the set of $n \times n$ {\em permutation matrices}, i.e. the set of matrices whose entries are $0$ or $1$ 
such that in every column and every   row there is a unique $1$. 
The polytope 
\[P(G, \pi) : = \conv(\pi(G)) \subseteq \Mat_n(\R) \cong \R^{n \times n}\]
is called the {\em permutation polytope} associated to $(G, \pi)$.
If $\pi$ or $G$ are clear from the context then  we abbreviate $P(G, \pi)$ by $P(G)$ or by $P(\pi)$.

The special case $G=\Sym(n)$ yields the well-known $n$th {\em Birkhoff
  polytope} $B_n:=P(\pi(\Sym(n)))$ (see e.g. \cite{BS96}). 
  The concept of a permutation polytope can be generalized to a representation polytope by considering any 
  real representation of $G$ instead of a permutation representation, see ~\cite{Baumeister2007}.

\subsection{Notions of equivalence of polytopes.}

For a standard reference on polytopes we refer to \cite{Zie95}.
If the vertices of a polytope $R \subseteq \R^m$ are a subset of a full dimensional lattice $\Lambda$ 
in $\R^m$, then we call $P$ a lattice polytope. In this sense every permutation polytope is a lattice
polytope, as the vertices all lie in $\Mat_n(\Z)$.

 As moreover, every vertex of a permutation polytope is a matrix whose entries are only
$0$ and $1$, it is also a  {\em $0/1$-polytope}, i.e. a polytope whose vertices are in the set
$\{0,1\}^d$ for some $d \in \N$.

There are several notions of equivalence of (lattice) polytopes (see \cite{Zie95}):

\begin{defi}
{\rm 
Two polytopes $P \subset \R^m$ and $Q \subset \R^n$ are} affinely
equivalent {\rm if there is an affine isomorphism of the affine hulls $\phi
\colon \aff (P) \to \aff (Q)$ that maps $P$ onto $Q$, write $P \approx_{aff} Q$. For} lattice
equivalence {\rm we additionally require that $\phi$ is an isomorphism of
the affine lattices $(\aff P) \cap \Lambda \to ( \aff Q) \cap
\Lambda'$.} Combinatorial equivalence {\rm is an equivalence of the face
lattices as posets.}
\end{defi}

\subsection{Notions of equivalence of groups.}

To identify permutation groups that define affinely equivalent permutation polytopes  the notion of
effective equivalence has been introduced (see \cite{Baumeister2007}). 

For $K = \R$ or $\C$ we
denote by $\Irr_K(G)$ the set of pairwise non-isomorphic irreducible $K$-representations, i.e. homomorphisms
$G \to \GL(W)$ where $W$ is a $K$-vector space which does not contain a proper $G$-invariant subspace. 
For instance, there is the {\em trivial representation}, $1_G$: $G \to \GL(K)$, $g \mapsto 1$.  
Every representation   $\rho: G \to \GL(V)$ over $K$  splits into 
irreducible representations. We denote these {\em irreducible factors} of $\rho$ by $\Irr_K(\rho) \subseteq \Irr_K(G)$.

\begin{defi}\label{stable equivalence}
{\rm Two real representations $\rho_1$ and $\rho_2$ of $G$ are} stably equivalent {\rm if they contain the same non-trivial irreducible factors.
Two faithful real representations $\rho_i: G_i \rightarrow \GL(V_i)$ (for $i = 1,2$) of finite groups are}
effectively equivalent
{\rm if there exists an isomorphism
$\varphi:G_1 \rightarrow G_2$ such that  $\rho_1$ and $\rho_2 \circ \varphi$ are stably equivalent $G_1$-representations, write $\rho_1 \approx_{eff} \rho_2$.
Moreover, we say $G_1 \leq \Sym(n_1)$ and  $G_2 \leq \Sym(n_2)$ are} effectively equivalent permutation
groups {\rm if $G_1 \hookrightarrow \Sym(n_1)$ and $G_2 \hookrightarrow \Sym(n_2)$  are 
effectively equivalent permutation representations.}
\end{defi}

The permutation groups given in the introduction are effective equivalent. One may think that  two transitive permutation groups 
$G_1$ and $G_2$ which are effective equivalent  already have to be equal. This is not the case as demonstrated in the next example
(see also Example~\ref{almost}).
\bigskip\\
{\bf Example.}
Let  $G = \PSL_2(13)$ and consider the actions $\pi_1$ and $\pi_2$ on the coset spaces $G/H_1$ and $G/H_2$ where
$H_1$ is subgroup of $G$ isomorphic to $D_{14}$ and $H_2$ a subgroup isomorphic to $ D_{12}$. According to Atlas-notation the
permutation characters are $1a + 12abc + 13a + 14aa$
and $1a + 12abc + 13aa + 14aa$, respectively, see \cite{CNPW85}. Thus $\pi_1$ and $\pi_2$ are two transitive effective equivalent 
presentations which are different. 
\bigskip\\
An immediat{\bf e} consequence of the definition of effective equivalence is the following.

\begin{lemma}\label{equal}
 If $G_1=(G,\pi_1)$ and $G_2=(G,\pi_2)$ are two  permutation groups such that $P(\pi_1) = P(\pi_2)$, then $G_1$ and $G_2$
 are effective equivalent. 
\end{lemma}
\begin{proof}
 In this case $\pi_1^{-1}\pi_2$ is automorphism of $G$ and therefore, $G_1$ and $G_2$
are effective equivalent.
\end{proof}

In ~\cite[2.3]{Baumeister2007} we showed that if $\rho$ and $\bar{\rho}$ are two  stably equivalent real representations
of a finite group $G$, then $P(\rho)$ and $P(\bar{\rho})$ are affinely equivalent. If $\pi_1$ and $\pi_2$ are  effectively
equivalent  permutation representations, then  $\pi_1$ and $\pi_2 \circ \varphi$ are stably equivalent for some isomorphism
$\varphi: G_1 \rightarrow G_2$. As $P(\pi_2) = P(\pi_2 \circ \varphi)$ the following holds as well:

\begin{theorem}
The permutation polytopes related to two effectively
equi\-valent permutation representations are affinely equivalent.
\end{theorem}

Notice that Example~2.7 in \cite{Baumeister2007} shows that effectively
 equivalent permutation representations
do not necessarily have lattice equivalent permutation polytopes.
 This example shows as well that the volumes of two permutation polytopes associated
to effectively equivalent permutation representations may be different.

\section{The examples.}

In this section we present an example of a group with two non effectively equivalent
permutation representations such that the related permutation polytopes
are affinely equivalent. But first we show the following "almost example". It consists of two permutation groups  which are
not stably equivalent, but whose permutation polytopes are even equal.  It is not really an example to our question as the 
permutation groups are effectively equivalent.

\subsection{An "almost example".}\label{almost}

Let $G = \Alt(6)$. Then $G$ contains two different
subgroups $H_1$ and $H_2$ which are both isomorphic to $\Alt(5)$, but not conjugate in
$G$. We may choose $H_1$ as the stabilizer of $1$ in the action of $G$ on the set
$[6] := \{1, \ldots , 6\}$. Then $H_2$ is transitive on $[6]$. The group $G$ acts  on
both coset spaces $G/H_1$  and $G/H_2$, which yields two permutation representations $\pi_1$ and $\pi_2$. These representations are not stably equivalent, as they contain
different irreducible constituents, see for instance  \cite{CNPW85}, p. 4. On the other hand, as $|G:H_i| = 6$ for $i = 1,2$, both
representations $\pi_1$ and $\pi_2$ induce embeddings of $G$ into $\Sym(6)$. Since in $\Sym(6)$ there is only one subgroup 
isomorphic to $\Alt(6)$, it follows that $\pi_1(G) = \pi_2(G)$. Thus  by Lemma~\ref{equal}
the two representations $\pi_1$ and $\pi_2$ are effectively equivalent.
 \medskip
 \\
 Example~\ref{almost} provides two transitive effective equivalent permutation representations $\pi_1$ and $\pi_2$ of the same degree 
 which are different. But $\pi_1 = \pi_2 \circ \varphi$ for some $\varphi \in \Aut(G)$. We would like to know whether this is always the 
 case:
 
 \begin{quest}
 Do two  transitive effective equivalent permutation representations $(G,\pi_1)$ and $(G,\pi_2)$ of the same degree  always only differ
 by an automorphism of $G$?
 \end{quest}

\subsection{The example.}

Let $A = (\Z_2)^2$ be the direct product of two cyclic groups of order two, $B = \Z_4$ and $C = \Z_3$ cyclic groups of order $4$ and $3$,
and let $G := A \times B \times C$. 
In the following we define two different permutation representations of $G$:
\medskip\\
{\bf The permutation representation $\pi_1$.} Let $O_1$ be the disjoint union of the right coset spaces $O_{11}:=G/A$ and $O_{12}:=G/(B\times C)$
 and let  $G$ act by left multiplication on $O_1$.
Then $O_{11}$ and $O_{12}$ are the $G$-orbits. The kernels of the action of $G$ on $O_{11}$ and
$O_{12}$ are $A$ and $B\times C$, respectively. By Lemma~2.7 and Theorem~3.5 of \cite{Baumeister2007} $P(\pi_1)$ is the 
combinatorial product of an $11$-simplex with a $3$-simplex.

Notice, if $G = H \times K$ for some subgroups $H$ and $K$ of $G$, then we can extend every irreducible complex representation $\varphi_H$
of $H$  to an irreducible complex representation $\varphi$ of $G$ by sending every element of $K$ to the identity. Therefore, we can
embed $\Irr_\C(H)$ into $\Irr_\C(G)$.
In this sense  $\Irr_\C(\pi_1)$ is the union of $\Irr_\C(B\times C)$ and $\Irr_\C(A)$. As for an abelian group $\Irr_\C(G) \cong G$,
see Paragraph~6, 6.4 in \cite{Hup67}, it follows
that $\Irr_\C(\pi_1)$ is the union of two subgroups isomorphic to $\Z_4 \times \Z_3$ and $ (\Z_2)^2$, respectively.
\medskip\\
{\bf The permutation representation $\pi_2$.} Let $O_2$ be the disjoint union of the right coset spaces $O_{21}:=G/B$ and $O_{22}:=G/(A\times C)$ and 
let $G$ act  by left multiplication on $O_2$.
Then $O_{21}$ and $O_{22}$ are the $G$-orbits. The kernels of the action of $G$ on $O_{21}$ and
$O_{22}$ are $B$ and $A\times C$, respectively. By Lemma~2.7 and Theorem~3.5 of \cite{Baumeister2007} $P(\pi_2)$ is again the 
combinatorial product of an $11$-simplex with a $3$-simplex.

Here $\Irr_\C(\pi_2)$ is the union of $\Irr_\C(A\times C)$ and $\Irr_\C(B)$ and therefore, the union of two subgroups isomorphic to
$(\Z_2)^2 \times \Z_3$ and $\Z_4$.

It follows that $P(\pi_1)$ and $P(\pi_2)$ are affinely equivalent. In $\Irr_\C(\pi_1)$ there is an irreducible representation
of order $12$, while every element in $\Irr_\C(\pi_2)$ has order at most $6$. This shows that  the induced real representations
$\pi_1$ and $\pi_2\circ \varphi$ do not contain the same irreducible factors for every automorphism $\varphi \in \Aut(G)$. Thus $\pi_1$
and $\pi_2$ are not effectively equivalent.

\begin{remark}\label{Allg Beispiel}
If $G = A \times B\times C$ is an abelian group such that $A$ and $B$ are non-isomorphic groups of the same size,
 then we can always construct such an example with $\pi_1$ and $\pi_2$ the actions of $G$ on the unions of coset spaces 
 $O_1:= G/A \cup G/(B \times C)$ and $O_2:= G/B \cup G/(A \times C)$, respectively.
 \end{remark}

\section{Characterization of effectively equivalence.}

Let $G = (G, \pi)$ be a permutation group of degree $n$ with permutation module $V$.
Then the   affine hull  of the polytope $P= P(\pi)$ is
$$\aff(\pi(G)) = \{\sum_{g \in G} \lambda_g M_g \mid \lambda_g \in \R, \sum_{g \in G} \lambda_g = 1\} = 
E_n +U_\pi,~\mbox{where}$$
$$U_\pi:=  \{ \sum_{g\in G } \lambda_g  M_g  \mid\lambda_g \in \R, \sum_{g \in G} \lambda_g =0\} =
 \{\sum_{g \in G} \lambda_g (M_g - M_e) \mid \lambda_g \in \R\} $$
$$= \langle M_g-M_h~|~g , h \in G \rangle_\R \leq \End(V).$$
The $\R$-vector space $U_\pi$ is  a $G$-module through the definition
$$h M := M_h M = \pi(h)M,  \mbox{for}~h \in G~\mbox{and}~M \in U_\pi,$$
as $\pi$ is a group homomorphism from $G$ into $\Sym(n) \leq \GL_n(\R)$.
Notice, that if in particular $M = \pi(g)$, then $h M = \pi(hg)$.

In order to nicely describe the  structure of $U_\pi$ we introduce more notation. 
For $\chi \in \Irr_\R(G)$ let $V(\chi)$ be an irreducible $\R G$-module with character $\chi$. Then $\End_{\R G}(V(\chi))$ 
is either isomorphic to $\R$, $\C$ or to the quaternions $\HH$ and thus $d_{\chi}:= 1/\dim_{\R} \End_{\R G} (V(\chi))$ is either $1$,
 ${1 \over 2}$ or ${1 \over 4}$. 
Set 
$$V_\pi:=   \{ \sum_{g\in G } \lambda_g  M_g  \mid \lambda_g \in \R\}~\mbox{and}~\epsilon_\pi:= \sum_{g \in G} M_g.$$
Then $V_\pi = U_\pi \oplus \R \epsilon_G$.  

\begin{theorem}\label{Isotype}
Let $G = (G, \pi)$  be a permutation group of degree $n$ with permutation module $V$.
Then $U_\pi$ is isomorphic to $$\sum_{\chi \in \Irr_\R(\pi) \setminus \{1_G\}} (d_\chi \cdot \chi(1)) V(\chi)$$
as an $\R G$-module, where for  $d$ a natural number $dV(\chi)$ is the direct sum of $d$ to $V(\chi)$ isomorphic $\R G$-modules.
\end{theorem}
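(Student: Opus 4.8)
The plan is to realise $V_\Pi$ as a quotient of the group algebra and then read off the module structure from the Artin--Wedderburn decomposition. First I would observe that the assignment $\sum_{g} \lambda_g g \mapsto \sum_g \lambda_g M_g$ extends $\Pi$ to an $\R$-algebra homomorphism $\widetilde\Pi \colon \R G \to \End(V)$ whose image is exactly $V_\Pi$. Equipping $\End(V)$ with the left action $h \cdot M = \Pi(h)M$ used in the statement, this map becomes a homomorphism of left $\R G$-modules, so $V_\Pi \cong \R G / \ker\widetilde\Pi$ as $\R G$-modules.

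Next I would compute the regular module. Since $\R G$ is semisimple by Maschke's theorem, it decomposes as $\R G \cong \bigoplus_{\chi\in\Irr_\R(G)} A_\chi$ with $A_\chi \cong \Mat_{n_\chi}(D_\chi)$ the simple two-sided ideal attached to $\chi$, where $D_\chi = \End_{\R G}(V(\chi))$ and $n_\chi = \dim_{D_\chi} V(\chi)$. As a left module over itself each block $A_\chi$ is the direct sum of its $n_\chi$ columns, every column being isomorphic to $V(\chi)$; and from $\dim_\R V(\chi) = \chi(1)$ together with $\dim_\R D_\chi = 1/d_\chi$ one obtains $n_\chi = \chi(1)/\dim_\R D_\chi = d_\chi\chi(1)$. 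Hence $\R G \cong \bigoplus_{\chi}(d_\chi\chi(1))V(\chi)$ as left $\R G$-modules.

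I would then identify the kernel. Because each $A_\chi$ acts on $V$ through the $\chi$-isotypic component of $V$, the block $A_\chi$ lies in $\ker\widetilde\Pi$ precisely when $V(\chi)$ does not occur in $V$, that is, when $\chi\notin\Irr_\R(\Pi)$. Thus $\ker\widetilde\Pi = \bigoplus_{\chi\notin\Irr_\R(\Pi)}A_\chi$ and $V_\Pi \cong \bigoplus_{\chi\in\Irr_\R(\Pi)}(d_\chi\chi(1))V(\chi)$. Finally, $\epsilon_\Pi = \widetilde\Pi\!\left(\sum_g g\right)$ is fixed by the $G$-action, so $\R\epsilon_\Pi$ is a trivial submodule realising the unique copy of $V(1_G)$ (note $d_{1_G}\cdot 1_G(1) = 1$); since $V_\Pi = U_\Pi \oplus \R\epsilon_\Pi$ is a decomposition of $\R G$-modules, cancelling this summand yields the asserted isomorphism for $U_\Pi$.

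The step requiring the most care is the multiplicity bookkeeping in the Wedderburn decomposition: one must verify that the left-module multiplicity of $V(\chi)$ is $d_\chi\chi(1)$ rather than $\chi(1)$ or $d_\chi\chi(1)^2$, which hinges on correctly passing between $\R$-dimensions and $D_\chi$-dimensions. One must also confirm that $\R\epsilon_\Pi$ accounts for exactly the trivial constituent, so that the final direct-sum cancellation is legitimate as $\R G$-modules and not merely as $\R$-vector spaces.
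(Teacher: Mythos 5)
Your proposal is correct and follows essentially the same route as the paper's proof: extend $\Pi$ to an algebra homomorphism on $\R G$, invoke Maschke and Wedderburn, observe that exactly the simple blocks attached to $\Irr_\R(\Pi)$ survive, compute the left-module multiplicity $d_\chi\chi(1)$ of $V(\chi)$ in its block, and split off the trivial summand $\R\epsilon_\Pi$ to pass from $V_\Pi$ to $U_\Pi$. The only cosmetic differences are that you phrase the image as the quotient $\R G/\ker\widetilde\Pi$ rather than as a direct sum of surviving blocks, and you verify the multiplicity by the column decomposition of $\Mat_{n_\chi}(D_\chi)$ where the paper cites Huppert.
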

\begin{proof}
Extend $\pi$ linearly to an  $\R$-algebra epimorphism $\pi: \R G \to V_\pi$. This then is $\R G$-linear.  
By Maschke's Theorem, the group algebra $\R G$ is semi-simple. Thus $V_\pi$ is semi-simple as well, and 
by a theorem by Wedderburn  (see for instance \cite{Hup67} Chapter V, Hauptsatz~4.4) we get
$\R G = \oplus_{i=1}^m A_i$ with $A_i$ simple.  
Since $A_i$ is simple, 
either $\pi|A_i$ is injective or $\pi(A_i)=0$. Thus $V_\pi$ is semi-simple as well and we can assume that there is an integer $k \leq m$ such that 
$V_\pi \cong \oplus_{i=1}^k A_i$. Moreover, we can assume that $A_1 =\R \epsilon$ with 
$\epsilon =\sum_{g \in G} g$.
Since $\pi(\epsilon) =\epsilon_\pi$, we have $U_\pi =\oplus_{i=2}^k A_i$. 

Let $\chi \in \Irr_\R(G)$ be an irreducible  representation of $G$. Then $V(\chi)$ is a
composition factor of the $G$-module $V$
 if and only if there is an index 
 $1 \leq i \leq k$ such that the $\R$-linear extension of $\chi$ does not vanish on 
 $A_i$ or equivalently, $A_i = e_{\chi} \R G$ with
$$e_{\chi}=\sum_{\psi \in \Irr_{\C}(G|\chi)} e_{\psi} ~=   \sum_{\psi \in \Irr_{\C}(G|\chi)} {1 \over {|G|}}
\psi(1) \sum_{g \in G} \overline{\psi(g)} g$$ the central idempotent corresponding to $\chi$. In this case,
 $$\dim_{\R} A_i =\dim_{\R} End_{\R G}(V(\chi)) \cdot (\dim_{End_{\R G} (V(\chi))} V(\chi)))^2 $$ 
 $$= (1 /d_{\chi}) \cdot (d_{\chi} \cdot \chi(1))^2 = d_{\chi} \cdot \chi(1)^2$$
  and $A_i \cong \End_{\R}(V(\chi)) \cong(d_\chi \cdot \chi(1)) V(\chi)$ as 
 an $\R G$-module, see  \cite{Hup67}, Chapter V, Satz 4.5. Since $\epsilon$ is the central idempotent corresponding to the trivial 
 representation, the claim follows. 
 \end{proof}

The following lemma shows that affine maps between permutation polytopes are always induced by linear maps of their 
linear hulls.

\begin{lemma}\label{Fortsetzung} If $V$ is an $\R$-vectorspace and $\psi: \aff(P(\pi)) \to V$ an affine map, then $\psi$
can be uniquely lifted to a linear map $\Psi: V_{\pi} \to V$.
\end{lemma}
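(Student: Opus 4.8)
The plan is to exhibit the lift explicitly and then verify uniqueness. Recall that $\aff(P(\Pi)) = E_n + U_\Pi$, while $V_\Pi = U_\Pi \oplus \R\epsilon_\Pi$ and $E_n = \Pi(e) = M_e \in V_\Pi$. So $V_\Pi = \R E_n \oplus U_\Pi$ as well, since $E_n \in \aff(P(\Pi))$ and the affine hull is a coset of the linear space $U_\Pi$. The key observation is that an affine map on a coset $E_n + U_\Pi$ is determined by its value at the basepoint $E_n$ together with a linear map on the direction space $U_\Pi$.

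First I would write any element of $\aff(P(\Pi))$ uniquely as $E_n + u$ with $u \in U_\Pi$, and set $\psi_0 \colon U_\Pi \to V$ to be $\psi_0(u) := \psi(E_n + u) - \psi(E_n)$. Because $\psi$ is affine, $\psi_0$ is $\R$-linear on $U_\Pi$: indeed affineness of $\psi$ means it preserves affine combinations, which translates precisely into additivity and homogeneity of $\psi_0$ on the difference space. Next, since $V_\Pi = \R E_n \oplus U_\Pi$, I would define $\Psi \colon V_\Pi \to V$ by declaring $\Psi(E_n) := \psi(E_n)$ and $\Psi(u) := \psi_0(u)$ for $u \in U_\Pi$, and extending $\R$-linearly, i.e. $\Psi(\lambda E_n + u) := \lambda\,\psi(E_n) + \psi_0(u)$. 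This is linear by construction. To see it lifts $\psi$, I check on the affine hull: for $\lambda = 1$ we get $\Psi(E_n + u) = \psi(E_n) + \psi_0(u) = \psi(E_n + u)$, so $\Psi$ restricts to $\psi$ on $\aff(P(\Pi))$.

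For uniqueness, suppose $\Psi'$ is another linear lift. Then $\Psi'$ and $\Psi$ agree on $\aff(P(\Pi)) = E_n + U_\Pi$. In particular they agree at $E_n$, and for every $u \in U_\Pi$ they agree at $E_n + u$, hence $\Psi'(u) = \Psi'(E_n + u) - \Psi'(E_n) = \Psi(E_n + u) - \Psi(E_n) = \Psi(u)$ by linearity. So $\Psi$ and $\Psi'$ agree on $E_n$ and on all of $U_\Pi$, which together span $V_\Pi$; by linearity $\Psi' = \Psi$.

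The main thing to get right is the decomposition $V_\Pi = \R E_n \oplus U_\Pi$ and the fact that $E_n$ really does lie in the affine hull so that it serves as a legitimate basepoint; the rest is the standard dictionary between affine maps on an affine subspace and linear maps on its direction space plus a choice of image of the basepoint. I expect no serious obstacle here, only bookkeeping: the slightly delicate point is confirming that $\R E_n + U_\Pi$ is a direct sum inside $V_\Pi$, which holds because $\epsilon_\Pi \notin \aff(P(\Pi))$ forces $E_n \notin U_\Pi$, so $E_n$ is independent from $U_\Pi$ and spans the missing one dimension.
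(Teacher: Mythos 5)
Your construction is sound, and it is in essence the paper's own argument in different clothes: both proofs come down to the single fact that $\aff(P(\Pi))$ is an affine hyperplane of $V_\Pi$ missing the origin (equivalently, $E_n \notin U_\Pi$), after which the standard dictionary between affine maps on an affine subspace and linear maps on its span does the rest. The paper phrases this by noting $V_\Pi$ is the affine hull of $\{0\}$ and $P(\Pi)$, extending $\psi$ affinely with $\Psi(0)=0$, and observing that an affine map fixing $0$ is linear; you phrase it via the decomposition $V_\Pi = \R E_n \oplus U_\Pi$. Same idea, same dependencies.

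The gap sits exactly at the point you yourself call ``slightly delicate''. Your justification that the sum $\R E_n + U_\Pi$ is direct --- ``$\epsilon_\Pi \notin \aff(P(\Pi))$ forces $E_n \notin U_\Pi$'' --- does not work as written. First, the premise $\epsilon_\Pi \notin \aff(P(\Pi))$ is never proved (and it is actually false for the trivial group, where $\epsilon_\Pi = E_n$ is a vertex). Second, the implication itself is not immediate: to extract it one must argue that if $E_n \in U_\Pi$ then $\aff(P(\Pi)) = E_n + U_\Pi = U_\Pi$ is a \emph{linear} subspace, hence closed under scaling, hence contains $|G|$ times the barycenter of $P(\Pi)$, which is $\epsilon_\Pi$; none of that reasoning appears in your text. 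Moreover, the only natural way to prove your premise is the row-sum argument, which settles the matter directly with no detour: every element of $\aff(P(\Pi))$ is a matrix whose rows and columns each sum to $1$, while every element of $U_\Pi$, being a real span of differences $M_g - M_h$, has all row and column sums equal to $0$; hence $E_n \notin U_\Pi$ (equivalently, $0 \notin \aff(P(\Pi))$). This one-line observation is precisely how the paper closes the same hole; with it inserted in place of your stated justification, your proof is complete.
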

\begin{proof} Since $P(\pi)$ contains a basis of $V_\pi$, the space $V_\pi$ is the affine hull of $\{0\}$ and $P(\pi)$. Moreover, every 
element in $\aff(P(\pi))$ is a matrix whose rows and columns all have sum $1$. This shows that
$0 \not\in \aff(P(\pi))$. Thus there is a unique affine map $\Psi: V_\pi \to V$ such that $\Psi|\aff(P(\pi)) =\psi$ 
and $\Psi(0) =0$. Since $\Psi(0) =0$, the map $\Psi$ is linear.
\end{proof}

For $\varphi \in \Aut (G)$ we say that two $G$-modules  $U_1$ and $U_2$ are {\em $\varphi$-isomorphic} if there exists an isomorphism 
$\phi:U_{1} \to U_{2}$ with
$\phi (g u ) = \varphi(g) \phi(u)$ for all $u \in U_{1}$ and all $g\in G$.
Theorem~\ref{Isotype} implies the following characterization of the effectively equivalent permutation groups:

\begin{theorem}\label{characterization} Let $G_1 = (G, \pi_1)$ and $G_2 = (G,\pi_2)$ be two permutation groups. 
Then the following are equivalent:
\begin{enumerate}
\item[(a)] $G_1$ and $G_2$ are effectively equivalent.
\item[(b)] There is a $\varphi \in  \Aut (G)$ such that $U_{\pi_1}$ and $U_{\pi_2}$  are $\varphi$-isomorphic.
 \item[(c)]  There is a $\varphi\in  \Aut (G)$ such that $V_{\pi_1}$ and $V_{\pi_2}$  are $\varphi$-isomorphic.
\item[(d)]  There is an affine isomorphism $\phi: \aff(P(\pi_1)) \to \aff(P(\pi_2))$ which maps $P(\pi_1)$ onto 
$P(\pi_2)$ and which restricted to $\pi_1(G)$ is a group homomorphism.
\end{enumerate}
\end{theorem}
\begin{proof}
Suppose that (b) holds. Then $U_{\pi_1}$ and $U_{\pi_2 \circ \varphi}$ are isomorphic $G$-modules. 
By Theorem~\ref{Isotype} $U_{\pi_2 \circ \varphi}$ and 
$U_{\pi_1}$ have the same irreducible constituents. Thus $\pi_2\circ \varphi$ and $\pi_1$ are stably 
equivalent and $G_1$ and $G_2$ are effectively equivalent; so (a) holds.  Statements (b) and (c) are equivalent since 
$V_{\pi_i}$ and $U_{\pi_i}$ only differ by the trivial $\R G$-module. 

Suppose that (a) holds. 
Then there is an automorphism $\varphi$ of $G$ such that $\pi_1$ and $\pi_2 \circ \varphi$ are stably equivalent. 
Thus if $(\lambda_g)_{g \in G}$ is a family of real numbers with $\sum_{g \in G} \lambda_g =0$, 
then by \cite{Baumeister2007} Theorem~2.4
$$\sum_{g \in G} \lambda_g \pi_1 (g) =0~\mbox{ if and only if}~\sum_{g \in G} \lambda_g \pi_2
 (\varphi(g)) =0.$$
  Thus the map $$\phi: \aff(P(\pi_1)) \to \aff(P(\pi_2)), \sum_{g \in G} \lambda_g \pi_1(g) \mapsto 
 \sum_{g \in G} \lambda_g \pi_2(\varphi(g)),$$  
 where $  \sum_{g\in G} \lambda_g =1,$ is a well-defined affine map.

 By the same argument, $\phi$ is injective, and as the image of $\phi$ affinely 
 spans $\aff(P(\pi_2))$, the map $\phi$ is surjective as well. As $\pi_1,\pi_2$ and $\varphi$ are group homomorphisms, 
 the restriction of $\phi$ to $\pi_1(G)$ is a group homomorphism onto $\pi_2(G)$. This shows that (a) implies (d).
 
Suppose that (d) holds. We want to show (c). First note that $\phi$ maps $\pi_1(G)$ bijectively onto 
 $\pi_2(G)$, since these are vertices of the corresponding polytopes. Thus $\phi$
 induces a group isomorphism between $\pi_1(G)$ and $\pi_2(G)$ which we will also call $\phi$. 
 Then $\varphi:= \pi_2^{-1} \circ \phi \circ \pi_1$ is a group automorphism of $G$. If $u =\sum_{g \in G} \lambda_g
 \pi_1(g) $ with $\sum_{g\in G} \lambda_g =1$ and $x \in G$, then
 $$\phi(xu) =  \phi(x \sum_{g\in G} \lambda_g \pi_1(g) ) = \phi(\sum_{g\in G} \lambda_g x\pi_1(g) ) = \sum_{g\in G} \lambda_g \phi(\pi_1(xg))= $$
 $$\sum_{g\in G} \lambda_g \phi(\pi_1(x))\phi(\pi_1(g)) =  \sum_{g\in G} \lambda_g \pi_2(\varphi(x)) \phi(\pi_1(g))$$
 $$=  \sum_{g\in G} \lambda_g \varphi(x) \phi(\pi_1(g)) = \varphi(x)  \sum_{g\in G} \lambda_g \phi(\pi_1(g)) = \varphi(x)\phi( u).$$
 by the definition of the action of $G$ on $U_{\pi_1}$ and $U_{\pi_2}$.
By Lemma~\ref{Fortsetzung} we can extend $\phi$ to a linear isomorphism $\Psi: V_{\pi_1} \to V_{\pi_2}$, for which one easily
sees that $\Psi(gu) = \varphi(g) \Psi(u)$ holds for all $g\in G$ and all $u \in V_{\pi_1}$.  
\end{proof}

The equivalence between (a) and (d) yields another possibility to describe effective equivalence.

\begin{defi}
{\rm If $P$ and $Q$ 
are two polytopes, $G$ a group which acts as automorphism group on both $P$ and $Q$,
 then an affine isomorphism $\phi: \aff(P) \to \aff(Q)$ with $\phi(P) = Q$ is called an} affine $G$-isomorphism {\rm if 
 there is an automorphism $\varphi$ of $G$ such that 
 $\phi(gx) =\varphi(g)\phi(x)$ holds for all $g \in G$ and all vertices $x $ of $P$.}
 \end{defi}
 
If $\pi$ is a permutation representation of a finite group $G$, then left multiplication defines a natural 
action of $G$ on $P(\pi)$ as we saw above. Then the equivalence between (a) and (d) of ~\ref{characterization} gives us:

\begin{corollary}\label{Criterion} If $G_1=(G,\pi_1)$ and $G_2=(G,\pi_2)$ are two permutation groups, then they are effectively 
equivalent if and only if there is  an affine $G$-isomorphism between the 
corresponding permutation polytopes. 
\end{corollary}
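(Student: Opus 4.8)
The plan is to derive the corollary directly from the equivalence (a)$\Leftrightarrow$(d) of Theorem~\ref{characterization}, the only work being to reconcile the \emph{equivariance} condition in the definition of an affine $(G,\varphi)$-isomorphism with the \emph{homomorphism} condition in (d). First I would fix the two group actions explicitly: on each $P(\Pi_i)$ the group $G$ acts by left multiplication, so on a vertex $g\cdot\Pi_i(h)=\Pi_i(g)\Pi_i(h)=\Pi_i(gh)$. With this convention an affine $(G,\varphi)$-isomorphism is an affine isomorphism $\Phi$ with $\Phi(P(\Pi_1))=P(\Pi_2)$ satisfying $\Phi(g\cdot x)=\varphi(g)\cdot\Phi(x)$ on the vertices $x\in\Pi_1(G)$.

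For the forward implication I would simply read off the map produced in the proof of (a)$\Rightarrow$(d). Effective equivalence yields $\varphi\in\Aut(G)$ and the affine isomorphism $\Phi$ with $\Phi(\Pi_1(h))=\Pi_2(\varphi(h))$ for all $h$. A one-line check then shows it is a $(G,\varphi)$-isomorphism: for a vertex $x=\Pi_1(h)$ one has $\Phi(g\cdot x)=\Phi(\Pi_1(gh))=\Pi_2(\varphi(g)\varphi(h))=\Pi_2(\varphi(g))\Pi_2(\varphi(h))=\varphi(g)\cdot\Phi(x)$, using only that $\varphi$ and $\Pi_2$ are homomorphisms.

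For the converse I would start from a given $(G,\varphi)$-isomorphism $\Phi$ and try to feed it into (d)$\Rightarrow$(a); here the main obstacle appears. Writing $\Phi(M_e)=\Pi_2(c)$, the equivariance forces $\Phi(\Pi_1(g))=\varphi(g)\cdot\Pi_2(c)=\Pi_2(\varphi(g)c)$, and the map $\Pi_1(g)\mapsto\Pi_2(\varphi(g)c)$ is \emph{not} multiplicative unless $c=e$, so the restriction to $\Pi_1(G)$ need not be a group homomorphism and (d) does not apply verbatim. The key idea I would use to repair this is a right translation: set $\Phi'(M):=\Phi(M)\,\Pi_2(c)^{-1}$. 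Right multiplication by the fixed invertible matrix $\Pi_2(c)^{-1}$ is linear and permutes the vertices of $P(\Pi_2)$ (sending $\Pi_2(k)$ to $\Pi_2(kc^{-1})$), hence is an affine automorphism of $P(\Pi_2)$, so $\Phi'$ is again an affine isomorphism carrying $P(\Pi_1)$ onto $P(\Pi_2)$. Because left and right multiplication of matrices commute, $\Phi'(g\cdot x)=\Pi_2(\varphi(g))\Phi(x)\Pi_2(c)^{-1}=\varphi(g)\cdot\Phi'(x)$, so $\Phi'$ remains a $(G,\varphi)$-isomorphism, but now $\Phi'(M_e)=M_e$. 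Consequently $\Phi'(\Pi_1(g))=\Phi'(g\cdot M_e)=\varphi(g)\cdot M_e=\Pi_2(\varphi(g))$, which \emph{is} a homomorphism on $\Pi_1(G)$; thus $\Phi'$ witnesses (d), and (d)$\Rightarrow$(a) finishes the argument. The crux, as indicated, is this normalization step: the abstract $(G,\varphi)$-notion permits the identity matrix to be moved to an arbitrary vertex, and one must observe that composing with the right regular translation fixing the base point leaves the left equivariance untouched.
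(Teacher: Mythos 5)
Your proposal is correct and follows the same route the paper intends: the corollary is obtained from the equivalence (a)$\Leftrightarrow$(d) of Theorem~\ref{characterization}, and your forward direction is exactly the observation that the map $\Pi_1(h)\mapsto\Pi_2(\varphi(h))$ constructed in the proof of (a)$\Rightarrow$(d) is automatically $(G,\varphi)$-equivariant. The paper itself states the corollary with no further argument, so the one piece of genuine content in your write-up -- the normalization $\Phi'(M)=\Phi(M)\,\Pi_2(c)^{-1}$ that moves $\Phi(M_e)$ back to $M_e$, using that right translation is linear, permutes the vertices of $P(\Pi_2)$, and commutes with the left action -- is precisely the detail needed to pass from ``equivariant on vertices'' to ``group homomorphism on $\Pi_1(G)$'' in the converse direction; the paper leaves this implicit, and your proof supplies it correctly.
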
 

Let $G_i=(G,\pi_i)$ be permutation groups for $i=1,2$. Suppose that there is an 
affine isomorphism $\phi:P(\pi_1) \to P(\pi_2)$. Then we get an isomorphism $\hat{\phi}:
\Aut (P(\pi_1)) \to  \Aut (P(\pi_2))$ by setting $\hat{\phi}(g)(x) : =\phi(g(\phi^{-1}(x))$ for $g\in \Aut (P(\pi_1))$ 
and $x\in P(\pi_2)$. For $g\in G$ let $\psi_i(g) \in \Aut (P(\pi_i))$ be the automorphism 
defined by $\psi_i(g)(u)=\pi_i(g)u$ for $u \in \aff(P(\pi_i))$. Hence $\psi_i:G \to \Aut (P(\pi))$ 
is a monomorphism. Note that $\psi_i(G)$ 
acts regularly on the set of vertices of $P(\pi_i)$.

\begin{theorem}\label{conjugate}
 $G_1$ and $G_2$ are effectively equivalent if and only if there is an affine isomorphism 
$\phi:\aff(P(\pi_1)) \to \aff(P(\pi_2))$ mapping $P(\pi_1)$ onto $P(\pi_2)$ such that 
$\hat{\phi}(\psi_1(G))$ and $\psi_2(G)$ are conjugate in 
$\Aut (P(\pi_1))$.
\end{theorem}
\begin{proof}
If $G_1$ and $G_2$ are effectively equivalent, then by ~\ref{characterization} there is an 
affine isomorphism $\phi:\aff(P(\pi_1)) \to \aff(P(\pi_2))$ with 
$\Phi(P(\pi_1)) =P(\pi_2)$  and an isomophism $\varphi$ of $G$ with 
$\phi(xu) = \varphi(x) \phi(u)$ for all $x \in G$ and $u \in P(\pi_1)$. Therefore it follows 
immediately that $\psi_2(G) =\hat{\phi}(\psi_1(G))$. Now suppose there is  $\phi:\aff(P(\pi_1)) \to \aff(P(\pi_2))$ 
such that $\phi(P(\pi_1)) =P(\pi_2)$ and $\psi_2(G)$ and $\hat{\phi}(\psi_1(G))$ are conjugate. We may assume that 
$\phi(\pi_1(1)) =\pi_2(1)$. Since $\psi_2(G)$ acts transitively on the vertices of $P(\pi_2)$, we may if and only if
assume that there is an $a \in \Aut (P(\pi_2))_{\pi_2(1)} $ such that $\psi_2(G)^a = \hat{\phi}
(\psi_1(G))$.
After replacing $\phi$ by $ a^{-1} \circ \phi$ we may assume $\hat{\phi}(\psi_1(G)) =\psi_2(G)$.
Thus there is a bijective map $\varphi:G \to G$ such that $\psi_2(\varphi(g))=\hat{\phi}(\psi_2(g))$ 
for all $g\in G$. Since $\hat{\phi},\psi_1$ and $\psi_2$ are isomorphisms, it follows that 
$\varphi$ is an automorphism of $G$. Thus we have for all $g \in G$ and all $u \in \aff(P(\pi_1))$:
$$\phi(\psi_1(g)(u)) =\phi(\psi_1(g)(\phi^{-1}(\phi(u)))) =\hat{\phi}(\psi_1(g)(\phi(u))=
\psi_2(\varphi(g))(\phi(u)).$$
Since $\phi(\pi_1(g)) =\pi_2(g)$, we get 
$$\phi(\pi_1(g)) = \phi(\psi_1(g)(\pi_1(1)))=\psi_2(\varphi(g))\pi_2(1)=\pi_2(\varphi(g)).$$
Thus $\phi|\pi_1(G)$ is a group homomorphism and the claim follows by Theorem~\ref{characterization}.
\end{proof} 

As a consequence this lemma establishes  Conjecture~5.4 of \cite{Baumeister2007} for our examples.

\begin{corollary}
If $P(\pi_1) \approx_{aff} P(\pi_2)$, but $\pi_1 \not\approx_{eff} \pi_2$, then $\Aut (P(\pi_1)) \cong \Aut (P(\pi_2))$ contains two
non-conjugate regular subgroups which are isomorphic to $G$. In particular $\Aut (P(\pi_i))  >  \psi_i(G)$.
\end{corollary}
\begin{proof}
The polytopes $P(\pi_1)$ and $P(\pi_2)$ are affinely equivalent while $\pi_1$ and $\pi_2$ are not effectively equivalent. Thus by 
Theorem~\ref{conjugate} the subgroups 
$\hat{\phi}(\psi_1(G))$ and $\psi_2(G)$ of $\Aut (P(\pi_1))$ are not conjugate and therefore not equal, which yields the assertion.
\end{proof}

We wonder whether these results can be used to prove the following.

\begin{quest}
 Let $G_1=(G,\pi_1)$ and $G_2=(G,\pi_2)$ be two transitive permutation groups (of the same degree).
 Are then $P(\pi_1)$ and  $P(\pi_2)$ affinely equivalent if and only if 
 $\pi_1$ and $\pi_2$ are effectively equivalent?
 \end{quest}

This question has certainly a positive answer if $G$ is abelian, as in this case every transitive presentation is already regular.
 If $G_1$ and $G_2$ are not transitive then the answer to the question is negative as our examples demonstrate.
 Permutation polytopes for cyclic groups are of importance in statistics, see for instance \cite{EFRS06}. 
 Therefore, it would be interesting to know whether they behave 
 more nicely.
 
 \begin{quest}
 Let $G_1=(G,\pi_1)$ and $G_2=(G,\pi_2)$ be two cyclic permutation groups.
  Are then $P(\pi_1)$ and  $P(\pi_2)$ affinely equivalent if and only if 
 $\pi_1$ and $\pi_2$ are effectively equivalent?
 \end{quest}

\section{Applications}

\subsection{Application of Lemma~\ref{equal} to $\Alt(6)$}

We obtain as an immediat{\bf e} consequence the following well-known fact.

\begin{lemma}
 The group $G = \Alt(6)$ has an outer automorphism which interchanges the two conjugacy classes of groups of $G$
 which are isomorphic to $\Alt(5)$.
\end{lemma}
\begin{proof}
Let $H_1$ and $H_2$ be two subgroups of $\Alt(6)$ of the two conjugacy classes and $\pi_1$ and $\pi_2$ be 
as in Subsetion~\ref{almost}. Then, as both representations yield the same permutation polytope, 
Lemma~\ref{equal} implies that $(G,\pi_1)$ and $(G,\pi_2)$ are effective equivalent.
Thus $\pi_1^{-1}\pi_2$ is an automorphism of  $\Alt(6)$ that maps $H_1$ onto $H_2$. 
\end{proof}

\subsection{Application of Theorem~\ref{characterization} to the example}

In this section we apply our characterization of the effectively equivalent permutation groups given in 
Theorem~\ref{characterization} 
to give a new, direct and more geometric  proof  of the fact that the permutation groups $(G, \pi_1)$ 
and $(G, \pi_2)$ presented in  Example~3.2 are not effectively equivalent.
We continue to use the notation introduced in Example~3.2.

Suppose that $(G, \pi_1)$ and $(G, \pi_2)$ are effectively equivalent. Then according to 
Theorem~\ref{characterization}  there is an 
affine isomorphism $\phi: \aff(P(\pi_1)) \to \aff(P(\pi_2))$ which maps $P(\pi_1)$ to 
$P(\pi_2)$ and which restricted to $\pi_1(G)$ is a group homomorphism.

Let $H$ be a subgroup of $G$ such that $\pi_1(H)$ is the set of vertices of a face of the polytope $P(\pi_1)$.  Then $\pi_1(H)$ is a subgroup 
of $\pi_1(G)$ which implies that $\phi(\pi_1(H))$  is a subgroup of $\pi_2(G)$. As $\phi$ is an affine isomorphism 
from $\aff(P(\pi_1))$ to $ \aff(P(\pi_2))$ as well, the set $\phi(\pi_1(H))$ is the set of vertices of a face of the polytope $P(\pi_2)$.

Now we count all the faces of $P(\pi_i)$ which have $24$ vertices and whose set of vertices is a subgroup of $\pi_i(G)$ (for $i = 1,2$).
The polytope $P(\pi_i)$  is the product of an $11$-simplex with a $3$-simplex. Therefore every face of $P(\pi_i)$  has the shape
$E \times F$ where $E$ is a face of the $11$-simplex  and $F$ a face of the $3$-simplex (for $i = 1,2$).
\medskip\\
{\bf The faces of $P(\pi_1)$ given by a subgroup of size $24$.}  
In this case $G = H_1 \times H_2$ where $H_1 \cong \Z_{12}$ and $H_2 \cong \Z_2^2$; and $P(\pi_1) = P(H_1) \times P(H_2)$.
Further $P(H_1)$ is an $11$-simplex and $P(H_2)$ a $3$-simplex.
If $H$ is a subgroup of $G$ such that $\pi_1(H)$ is the set of vertices of a face with $24$ vertices, then $H = K_1 \times K_2$ 
such that  $K_i$ is a subgroup of $H_i$ (for $i = 1,2$) and such that $|H| = |K_1|\cdot |K_2| = 24$.  Then either $K_1 = H_1$
 and $K_2$ of order $2$ or $K_1$ is of order $6$ and $K_2 = H_2$. As there is just one
 subgroup of order $6$ in $H_1$ and three subgroups of order $2$ in $H_2$, it follows that  the $24$-vertex faces which are coming from a subgroup 
 are precisely three faces of the shape of a prisma over an $11$-simplex  and  one face which is the product of a $5$-simplex with a $3$-simplex.
\medskip\\
{\bf The faces of $P(\pi_2)$  given by a subgroup of size $24$.}  
 Here we have the factorization $G = M_1 \times M_2$ where $M_1 \cong \Z_2^2 \times 3$ and $M_2 \cong \Z_4$. The polytope
 $P(\pi_2) = P(M_1) \times P(M_2)$ is the product of an $11$-simplex and a $3$-simplex.  If $H$ is a subgroup of $G$ 
 such that $\pi_2(H)$ is the set of vertices of a face with $24$ vertices, then as above $H = K_1 \times K_2$ 
such that  $K_i$ is a subgroup of $M_i$ (for $i = 1,2$) and such that $|H| = |K_1|\cdot |K_2| = 24$.  In this case there are three
subgroups of $M_1$ of size $6$ and precisely one subgroup of $K_2$ of size $2$. Therefore,  the $24$-vertex faces which are coming from a subgroup  are precisely one prisma over an $11$-simplex  and three faces which are the product of a $5$-simplex and a $3$-simplex.
\medskip\\
This contradicts the fact that $\phi$ maps every  face  of $P(\pi_1)$ which is induced by a subgroup of $G$ isomorphically onto 
a face  of $P(\pi_2)$ which is induced by a subgroup of $G$.
Thus $(G, \pi_1)$ and $(G, \pi_2)$ are not effectively equivalent.

\bibliographystyle{alpha}

\end{document}